\newtheorem{theorem}{Theorem}
\newtheorem{proposition}[theorem]{Proposition}
\newtheorem{corollary}[theorem]{Corollary}
\begin{document}

\title{On properly ordered coloring of vertices in a vertex-weighted graph}

\author{Shinya Fujita\footnote{School of Data Science, Yokohama City University, 22-2 Seto, Kanazawa-ku, Yokohama 236-0027, Japan. {\bf Email:} shinya.fujita.ph.d@gmail.com} \and
Sergey Kitaev\footnote{Department of Mathematics and Statistics, University of Strathclyde, Glasgow, G1 1XH, UK. {\bf Email:} sergey.kitaev@strath.ac.uk} \and Shizuka Sato\footnote{International College of Arts and Sciences, Yokohama City University,  22-2 Seto, Kanazawa-ku, Yokohama 236-0027, Japan. {\bf Email:} shizuka.sato.95@gmail.com} \and 
Li-Da Tong\footnote{Department of Applied Mathematics, National Sun Yat-sen University, No. 70, Lienhai Rd., Kaohsiung 80424, Taiwan. {\bf Email:} ldtong@math.nsysu.edu.tw}}

\maketitle              
\begin{abstract}
We introduce the notion of a properly ordered coloring (POC) of a weighted graph, that generalizes the notion of vertex coloring of a graph. Under a POC, if $xy$ is an edge, then the larger weighted vertex receives a larger color; in the case of equal weights of $x$ and $y$, their colors must be different. 
In this paper, we shall initiate the study of this special coloring in graphs. 
For a graph $G$, we  introduce the function $f(G)$ which gives the maximum number of colors required by a POC over all weightings of $G$. We show that $f(G)=\ell(G)$, where $\ell(G)$ is the number of vertices of a longest path in $G$. 

Another function we introduce is $\chi_{POC}(G;t)$ giving the minimum number of colors required over all weightings of $G$ using $t$ distinct weights. We show that the ratio of $\chi_{POC}(G;t)-1$ to $\chi(G)-1$ can be bounded by $t$ for any graph $G$; in fact, the result is shown by determining $\chi_{POC}(G;t)$ when $G$ is a complete multipartite graph. 

We also determine the minimum number of colors to give a POC on a vertex-weighted graph in terms of the number of vertices of a longest directed path in an orientation of the underlying graph. 
This extends the so called Gallai-Hasse-Roy-Vitaver theorem, a classical result concerning the relationship between the chromatic number of a graph $G$ and the number of vertices of a longest directed path in an orientation of $G$. 

\noindent {\bf Keywords:} Vertex coloring  \and Properly ordered coloring \and Vertex-weighted graph \and Gallai-Hasse-Roy-Vitaver theorem.
\end{abstract}
\section{Introduction}
In this paper, we consider simple graphs with no loops and no multiple edges. 
By {\em vertex coloring} of a graph, one normally means  assigning integers, called {\em colors}, from $\{1,2,\ldots\}$ to graph's vertices so that no two vertices sharing the same edge have the same color. The smallest number of colors needed to color a graph $G$ is known as its {\em chromatic number}, and is often denoted by $\chi(G)$. There is an extensive literature in graph theory dedicated to vertex coloring and its various generalizations (e.g., see \cite{Malaguti,MalToth} and references therein), and graph coloring has many practical applications, e.g.\ in scheduling~\cite{Marx}, register allocation~\cite{Chaitin}, and in several other areas.   

In this paper, we introduce the notion of a {\em properly ordered coloring} of a weighted graph, or {\em POC}, generalizing the notion of vertex coloring of a graph, and study some of its properties. 

Suppose that $(G,w)$ is a vertex-weighted graph with the vertex set $V(G)$  and the edge set $E(G)$, and  a weight function $w :  V(G)\rightarrow W$, where $W$ is a set of positive integers. Further, let $c : V(G)\rightarrow C$ be a vertex coloring, where $C=\{1,2,\ldots,\theta\}$.  A vertex coloring $c$ on $(G,w)$ is a properly ordered coloring (POC) if and only if for any edge $uv\in E(G)$,
\begin{itemize}
\item if $w(u)>w(v)$ then $c(u)>c(v)$;
\item if $w(u)=w(v)$ then $c(u)\neq c(v)$.
\end{itemize}
We also let $$\chi_{POC}(G,w):=\min\{\theta |\  c \mbox{ is a POC on }(G,w)\}.$$ 

Note that any POC is also a proper vertex coloring on the underlying graph $G$, and so $\chi_{POC}(G,w)\geq \chi(G)$.
 In particular, if $|W|=1$ then  $\chi_{POC}(G,w)= \chi(G)$. Also, note  that for any $(G,w)$, $1\leq \chi_{POC}(G,w)\leq |V(G)|$, and $\chi_{POC}(G,w)$ is well-defined as replacing the weakly ordered weights $w_1\leq w_2\leq\cdots\leq w_{|V(G)|}$, respectively, by $1, 2,\ldots,|V|$ gives a properly ordered coloring.  

Throughout this paper, we may assume that $W=\{1, 2, \ldots ,|W|\}$ and $(G,w)$ contains a vertex $v_j$ such that $w(v_j)=j$ for each $j=1,\ldots ,|W|$ by the following observation:   
In view of the definition of POC, note that, for any pair of vertices $u,v\in V(G)$ with $w(v)<w(u)$, we may ignore the difference value $w(u)-w(v)$ unless there exists a vertex $x\in V(G)$ such that $w(u)<w(x)<w(v)$. (To see this, suppose that $u$ and $v$ are such vertices of $(G,w)$ and $w(u)>w(v)+1$. Let $w'$ be a new weight function obtained from $w$ by changing $w(u)$ as $w'(u)=w(v)+1$ (note that $w'(x)=w(x)$ for all $x\in V(G)-\{u\}$). Obviously, a POC on $(G,w)$ can also be a POC on $(G,w')$.) 

For example, $\chi_{POC}($\hspace{-3.5mm}
\begin{minipage}[c]{3.5em}\scalebox{1}{
\begin{tikzpicture}[scale=0.5]

\draw [line width=1](0,-0.5)--(0,0.5);
\draw [line width=1](1,-0.5)--(1,0.5);
\draw [line width=1](0,-0.5)--(1,-0.5);
\draw [line width=1](0,0.5)--(1,0.5);

\draw (0,-0.5) node [scale=0.4, circle, draw,fill=black]{};
\draw (1,-0.5) node [scale=0.4, circle, draw,fill=black]{};
\draw (0,0.5) node [scale=0.4, circle, draw,fill=black]{};
\draw (1,0.5) node [scale=0.4, circle, draw,fill=black]{};

\node [left] at (0,-0.6){${\small 2}$};
\node [right] at (1,-0.6){${\small 3}$};
\node [left] at (0,0.6){${\small 1}$};
\node [right] at (1,0.6){${\small 1}$};

\end{tikzpicture}
}\end{minipage}
$)=3$ because the induced path $P_3$ given by 1, 2, 3 makes these three vertices require three distinct colors, and the only POC of the graph is \hspace{-3.5mm}
\begin{minipage}[c]{3.5em}\scalebox{1}{
\begin{tikzpicture}[scale=0.5]

\draw [line width=1](0,-0.5)--(0,0.5);
\draw [line width=1](1,-0.5)--(1,0.5);
\draw [line width=1](0,-0.5)--(1,-0.5);
\draw [line width=1](0,0.5)--(1,0.5);

\draw (0,-0.5) node [scale=0.4, circle, draw,fill=black]{};
\draw (1,-0.5) node [scale=0.4, circle, draw,fill=black]{};
\draw (0,0.5) node [scale=0.4, circle, draw,fill=black]{};
\draw (1,0.5) node [scale=0.4, circle, draw,fill=black]{};

\node [left] at (0,-0.6){${\small 2}$};
\node [right] at (1,-0.6){${\small 3}$};
\node [left] at (0,0.6){${\small 1}$};
\node [right] at (1,0.6){${\small 2}$};

\end{tikzpicture}
}\end{minipage}. 

In this paper, we shall investigate a relationship between a graph $G$ and properly ordered coloring on its weighted graphs $(G, w)$. In particular, we would like to find some invariants of a graph $G$ that can have a variety of vertex weight functions $w$. Along this line, we will show that the length of a longest path of a graph can be described as a function in terms of properly ordered coloring. To achieve this, for a graph $G$, let
$$f(G):=\max\{\chi_{POC}(G,w)\ |\ w \mbox{ is a weight function on }G\}.$$
Also, let $\ell(G)$ be the number of vertices of a longest path in $G$. For a longest path $P=p_1\ldots p_{\ell(G)}$ of $G$, 
any weight function $w$ such that $w(p_1)<\cdots <w(p_{\ell(G)})$ forces $\chi_{POC}(G,w)\ge \ell(G)$. Therefore, note that any graph $G$ satisfies $f(G)\ge \ell(G)$. Our first result is to show that, in fact the equality holds. 

\begin{theorem}\label{better-bound-f} Any graph $G$ satisfies $f(G)= \ell(G)$. \end{theorem}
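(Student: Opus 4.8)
Since the inequality $f(G)\ge \ell(G)$ has already been observed, the plan is to establish the reverse bound $f(G)\le \ell(G)$; equivalently, to show that every weighting $w$ of $G$ admits a POC using at most $\ell(G)$ colors. The strategy I would use mirrors the proof of the Gallai-Hasse-Roy-Vitaver theorem: I would encode the POC constraints in an acyclic orientation $D$ of $G$ and then color each vertex by the number of vertices on a longest directed path of $D$ terminating at it.

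Concretely, fix an arbitrary weight function $w$ and build an orientation $D$ of $G$ as follows. For every edge $uv$ with $w(u)\ne w(v)$, orient it toward the endpoint of larger weight, so that weight never decreases along an arc. For the edges joining two vertices of equal weight, I would fix, once and for all, an arbitrary linear ordering of the vertices in each weight class and orient every such edge from the smaller to the larger vertex in that order; this makes the restriction of $D$ to each weight class acyclic.

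The key step is to verify that $D$ is acyclic. Along every arc of $D$ the weight is non-decreasing, so on any directed cycle all vertices must share a common weight; such a cycle would then lie entirely inside a single weight class, contradicting the acyclic choice made there. With $D$ acyclic, set $c(v)$ to be the number of vertices on a longest directed path of $D$ ending at $v$. Whenever $u\to v$ is an arc we have $c(v)\ge c(u)+1$, so $c$ increases strictly along arcs; this immediately gives $c(u)>c(v)$ when $w(u)>w(v)$ and $c(u)\ne c(v)$ when $w(u)=w(v)$, so $c$ is a POC. Finally, the number of colors used equals the number of vertices on a longest directed path of $D$, and since every directed path of $D$ is in particular a path of $G$, this quantity is at most $\ell(G)$, completing the argument.

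I expect the main obstacle to be the acyclicity of $D$: the equal-weight edges are exactly the ones not pinned down by $w$, and one must orient them carefully (acyclically within each weight class) so that no directed cycle can form, while the monotonicity of $w$ along arcs rules out cycles that cross weight levels. Once acyclicity is secured, the coloring and the bound $\le \ell(G)$ follow formally.
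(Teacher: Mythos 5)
Your argument is correct and is in substance the same as the paper's: the paper's greedy Algorithm~F, applied to the vertices sorted by non-decreasing weight, assigns to each vertex exactly the number of vertices on a longest weight-monotone path ending there, which is precisely your longest-directed-path coloring of the acyclic orientation $D$. The only difference is packaging --- the paper phrases it as a greedy algorithm and traces back a color-decreasing path to bound the number of colors by $\ell(G)$, whereas you phrase it via the Gallai--Hasse--Roy--Vitaver orientation argument, which is essentially the machinery the paper itself deploys later for Theorem~\ref{main3}.
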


This theorem is shown in Section~\ref{upper-f-G-sec} by providing a greedy algorithm based on vertex ordering given by non-decreasing order of their weights to give a POC on $(G,w)$ with at most $\ell(G)$ colors (see Theorem~\ref{alg-F-prime-theorem} in Section~\ref{upper-f-G-sec}). As an immediate corollary of this theorem, we see that a graph has a weighting requiring the maximum possible number of colors if and only if the graph has a Hamiltonian path. This implies that computing $f(G)$ is NP-hard in general.    

Next we consider a POC on $(G, w)$ with a fixed $|W|$. 
For a positive integer $t$, let 
$$\chi_{POC}(G;t):=\min\{p\ |\ \chi_{POC}(G,w)\leq p \ \mbox{for every} \ w \ \mbox{with} \  |W|=t  \}.$$ 

Note that, by definition, $\chi_{POC}(G;t)\le \chi_{POC}(G;t')$ holds for any pair of $t$ and $t'$ with $t\le t'$ because any weight function $w$ with $|W|=t$ can also be regarded as a weight function with $|W|=t'$ by restricting the image of $w$. 
Somewhat surprisingly, we can show that the ratio of $\chi_{POC}(G;t)-1$ to $\chi(G)-1$ can be bounded by $t$ and the bound is best possible. Indeed, as observed in Section 4, there exist infinitely many graphs that attain the upper bound.   

\begin{theorem}\label{mainth}
For a positive integer $t$, any graph $G$ satisfies 
\begin{center}
$\displaystyle\frac{\chi_{POC}(G;t)-1}{\chi(G)-1}\le t$.
\end{center}
\end{theorem}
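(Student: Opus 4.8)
The plan is to prove the equivalent inequality $\chi_{POC}(G;t) \le t(\chi(G)-1)+1$; that is, to exhibit, for every weight function $w$ with $|W|=t$, a POC of $(G,w)$ using at most $t(\chi(G)-1)+1$ colors. Write $k=\chi(G)$ (the case $k=1$ is the edgeless graph and is trivial, so assume $k\ge 2$), fix a proper $k$-coloring $\phi:V(G)\to\{1,\dots,k\}$ of the underlying graph, and, using the normalization from the introduction, assume $W=\{1,\dots,t\}$. The first idea is the naive ``stacking'' construction: color a vertex $v$ of weight $i$ by $(i-1)k+\phi(v)$, placing the $i$-th weight class into the block $[(i-1)k+1,\,ik]$ of $k$ consecutive colors. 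This is visibly a POC, but it spends $tk$ colors, exceeding the target by $t-1$. The entire point is therefore to let consecutive blocks overlap in a single color, shrinking each block to span $k-1$ fresh colors and bringing the total down to exactly $t(k-1)+1$.

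The obstacle caused by overlapping is a boundary collision: if weight class $i$ occupies $[(i-1)(k-1)+1,\,i(k-1)+1]$ and class $i+1$ occupies $[i(k-1)+1,\,(i+1)(k-1)+1]$, the two blocks share the color $i(k-1)+1$, and a vertex of weight $i+1$ sitting at the bottom of its block that is adjacent to a vertex of weight $i$ sitting at the top of its block would receive the same color, violating both properness and monotonicity. To defeat this I would not fix a color position for each class of $\phi$; instead I would cyclically rotate, as the weight level increases by one, which class of $\phi$ occupies each position inside its block. Concretely, set $\pi_i(a)=((a+i-2)\bmod k)+1$ and color a vertex $v$ by
\[
c(v) = (w(v)-1)(k-1) + \pi_{w(v)}\big(\phi(v)\big).
\]
Each $\pi_i$ is a permutation of $\{1,\dots,k\}$, so the colors land in $\{1,\dots,t(k-1)+1\}$, and the rotation is chosen so that the class occupying the top position $k$ at level $i$ is exactly the class occupying the bottom position $1$ at level $i+1$.

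With this in place the verification is short. For any edge $uv$ of $G$ we have $\phi(u)\neq\phi(v)$. If $w(u)=w(v)$, then $c(u)-c(v)=\pi_{w(u)}(\phi(u))-\pi_{w(u)}(\phi(v))\neq 0$ because $\pi_{w(u)}$ is injective, giving properness. If $w(u)>w(v)$, a direct estimate gives $c(u)-c(v)\ge (w(u)-w(v))(k-1)-(k-1)\ge 0$, with equality forcing $w(u)=w(v)+1$, $\pi_{w(u)}(\phi(u))=1$, and $\pi_{w(v)}(\phi(v))=k$; but then $\phi(u)$ is the bottom class at level $w(v)+1$ while $\phi(v)$ is the top class at level $w(v)$, which by the rotation are the same class of $\phi$, contradicting $\phi(u)\neq\phi(v)$. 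Hence $c(u)>c(v)$, so $c$ is a POC.

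Since this construction applies verbatim to every $w$ with $|W|=t$, we get $\chi_{POC}(G;t)\le t(k-1)+1$, which is the claimed bound. I would also note that the verification used only $\phi(u)\neq\phi(v)$, i.e. that the endpoints lie in distinct classes, so $c$ is in fact a POC of the complete $k$-partite graph on the classes of $\phi$ carrying the same weights; thus the bound is really a statement about complete multipartite graphs, and I would establish that it is best possible by determining $\chi_{POC}$ exactly on complete multipartite graphs and producing weightings that realize $t(k-1)+1$. I expect the rotation step — certifying that the shared boundary colors are reused only within a single class of $\phi$, where no edge exists — to be the one genuinely delicate point; the remaining steps are routine arithmetic.
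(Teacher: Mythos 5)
Your construction is correct: $\pi_i$ is a bijection of $\{1,\dots,k\}$, the colors land in $\{1,\dots,t(k-1)+1\}$, and the equality analysis in the case $w(u)>w(v)$ does pin down $w(u)=w(v)+1$, $\pi_{w(u)}(\phi(u))=1$, $\pi_{w(v)}(\phi(v))=k$, which forces $\phi(u)\equiv\phi(v)\pmod{k}$ and hence $\phi(u)=\phi(v)$, a contradiction. So the argument is a complete proof of $\chi_{POC}(G;t)\le t(\chi(G)-1)+1$. However, your route is genuinely different from the paper's. The paper also reduces to the complete multipartite graph $K_{n_1,\dots,n_{\chi(G)}}$ obtained by filling in all edges between color classes (your closing remark that only $\phi(u)\neq\phi(v)$ is used is exactly this reduction), but it then derives the bound $(k-1)t+1$ from a much heavier structural analysis: it introduces maximum ordered cliques $H_1,\dots,H_t$ (one clique per weight level) and a maximum family $\mathcal{S}(H_1,\dots,H_t)$ of paths in the complement whose components are monochromatic, proves that the exact value of $\chi_{POC}(K_{n_1,\dots,n_k};t)$ equals $\sum_i|V(H_i)|-|V(\mathcal{S})|+q(\mathcal{S})$ maximized over weightings (Propositions~\ref{p1} and~\ref{p2}), and only then bounds this quantity by $(k-1)t+1$ via a counting contradiction (Corollary~\ref{cor}). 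Your overlapping-blocks-with-cyclic-rotation coloring replaces all of that machinery with a single explicit formula and a two-line verification; what it buys is brevity and self-containedness. What it does not give you is the exact determination of $\chi_{POC}$ on complete multipartite graphs, which the paper needs anyway for Proposition~\ref{p2} and for exhibiting the weightings attaining $t(k-1)+1$; your sharpness discussion at the end is only a sketch, but sharpness is not part of the theorem statement, so the proof as given stands on its own.
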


In other words, $\chi_{POC}(G;t)$ has a sharp upper bound in terms of $\chi(G)$. 
This theorem is shown by determining $\chi_{POC}(G;t)$ when $G$ is a complete multipartite graph (see Proposition~\ref{p2} in Section~\ref{POC-G-t-sec}). 
We remark that we can easily obtain the weaker statement that $\frac{\chi_{POC}(G;t)}{\chi(G)}\le t$. 
To see this, for a $(G, w)$ with $|W|=t$, let $G_i$ be the induced subgraph by the vertices of weight $i$ in $G$ for $i=1,\ldots ,t$ and give a proper vertex coloring on each $G_i$ so that $\min\{c(x)| x\in V(G_{1})\}=1$ and $\min\{c(x)| x\in V(G_{i+1})\}=\max\{c(x)| x\in V(G_{i})\}+1$ for every $1\le i\le t-1$. Among such vertex colorings, we can find a POC using colors $1,\ldots, t\chi(G)$ on $(G, w)$ because $\chi(G_i)\le \chi(G)$ holds for all $i$. So Theorem~\ref{mainth} is the refinement of this observation up to the tight bound. 

In this paper, we also determine $\chi_{POC}(G,w)$ in terms of the number of vertices of a longest directed path for an acyclic orientation on $G$. To state this, we use the following notation: for a digraph $D$, let $\ell'(D)$ be the number of vertices of a longest directed path in $D$. For a directed path $P'$ consisting of arcs $(p_i,p_{i+1})$ for $i=1,\ldots, |V(P')|-1$, we call $p_1$ and $p_{|V(P')|}$ the \textit{tail} and the \textit{head} of $P'$, respectively. For a vertex weighted graph $(G,w)$, an acyclic orientaion is \textit{good} if $w(x)\geq w(y)$ holds for any arc $(x,y)$ in the orientation; we also define $\ell'(G,w):=\min\{\ell'(D)|\ D$ is a good acyclic orientation on $(G,w)\}$.  

\begin{theorem}\label{main3}
Any weighted graph $(G,w)$ satisfies $\chi_{POC}(G,w)=\ell'(G,w)$. 
\end{theorem}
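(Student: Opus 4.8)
The plan is to establish the two inequalities $\chi_{POC}(G,w)\le \ell'(G,w)$ and $\chi_{POC}(G,w)\ge \ell'(G,w)$ separately, in each case adapting the standard argument behind the Gallai--Hasse--Roy--Vitaver theorem so as to keep track of the weights. The guiding principle is that a good orientation forces arcs to run from heavier to lighter vertices, while a POC forces the larger color onto the heavier endpoint of an edge; these two requirements turn out to be compatible, provided the longest-path coloring is read off in the correct direction.

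For the upper bound, I would fix a good acyclic orientation $D$ with $\ell'(D)=\ell'(G,w)$ and, for each vertex $v$, set $c(v)$ to be the number of vertices on a longest directed path of $D$ whose tail is $v$. Since $D$ is acyclic this is well defined, and $c(v)\in\{1,\dots,\ell'(D)\}$. If $(x,y)$ is an arc, then prepending $x$ to a longest directed path with tail $y$ yields a longer directed path with tail $x$, so $c(x)\ge c(y)+1>c(y)$. Now consider any edge $uv$. If $w(u)>w(v)$, then, $D$ being good, the edge is necessarily oriented $(u,v)$ (orienting it $(v,u)$ would require $w(v)\ge w(u)$), and hence $c(u)>c(v)$; if $w(u)=w(v)$, then, whichever way the edge is oriented, the two colors differ. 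Thus $c$ is a POC using at most $\ell'(D)$ colors, which gives $\chi_{POC}(G,w)\le \ell'(G,w)$.

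For the lower bound, I would start from an optimal POC $c$ using $\theta=\chi_{POC}(G,w)$ colors and orient every edge of $G$ from its larger-color endpoint to its smaller-color endpoint; this is well defined because a POC is in particular a proper coloring, so adjacent vertices receive distinct colors. The resulting orientation $D$ is good: if $c(u)>c(v)$ on an edge $uv$ but $w(u)<w(v)$, the POC condition at that edge would force $c(v)>c(u)$, a contradiction, so the tail of each arc has weight at least that of its head. Since colors strictly decrease along every arc, $D$ is acyclic and the vertices of any directed path carry pairwise distinct colors; hence every directed path has at most $\theta$ vertices and $\ell'(D)\le \theta$. Therefore $\ell'(G,w)\le \ell'(D)\le \chi_{POC}(G,w)$, and combining the two inequalities completes the proof.

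I expect the only real subtlety to be the bookkeeping of orientation conventions, namely matching \emph{good} (arcs directed from heavy to light) with \emph{colors increasing toward the heavier vertex}; this is precisely why the tail-based, rather than head-based, longest-path coloring must be used in the upper bound. Once the conventions are aligned, both directions reduce to the classical reasoning, and as a sanity check, specializing to $|W|=1$ should recover the Gallai--Hasse--Roy--Vitaver theorem, since then every acyclic orientation is good and $\chi_{POC}(G,w)=\chi(G)$.
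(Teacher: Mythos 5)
Your proof is correct. The lower bound half is essentially identical to the paper's Proposition~\ref{new}: orient each edge from the larger-color endpoint to the smaller, check goodness via the POC condition, and bound directed paths by the number of colors. The upper bound half, however, takes a genuinely different route. The paper proves $\chi_{POC}(G,w)\le\ell'(G,w)$ (Proposition~\ref{p3}) by first explicitly constructing a good acyclic orientation and then running a greedy algorithm (Algorithm F$'$) that processes vertices in non-decreasing weight order and assigns each vertex one more than the maximum color among its already-colored out-neighbours; the correctness argument traces back a maximal path realizing the colors, as in Theorem~\ref{alg-F-prime-theorem}. You instead use the classical Gallai--Hasse--Roy--Vitaver potential $c(v)=$ the number of vertices on a longest directed path with tail $v$, which gives $c(x)>c(y)$ along every arc directly (note you implicitly use acyclicity to guarantee $x$ does not already lie on the chosen path with tail $y$). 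Your version is arguably cleaner and makes the connection to the classical theorem transparent; the paper's version has the advantage of being an explicit weight-ordered greedy procedure reusing Algorithm~F. One small point of logical hygiene: you take a good acyclic orientation attaining $\ell'(G,w)$ without first establishing that any good acyclic orientation exists (the paper does this explicitly at the start of Proposition~\ref{p3}); your lower-bound construction does produce one, so the minimum is well defined, but it would be worth saying so.
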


This theorem can be regarded as the weighted version of the Gallai-Hasse-Roy-Vitaver theorem \cite{Gallai,Hasse,Roy,v}, which states that, for a graph $G$, $\chi(G)$ can be bounded by the number of vertices of a longest directed path in $D$, where $D$ is an orientation of $G$ and the upper bound is attained for an acyclic orientation of $G$.

We now briefly mention an application of the notion of POC to a real world problem. 
In the intelligent chemical processing, graph coloring methods can offer a better way to raise processing effectiveness. For example, there are many stages (or procedures) for some special functions or some restrictions on the relations between two stages in the polyethylene processes (or polymer manufacturing processes) \cite{Holz,Spal}. Machine learning \cite{Kishi,Sch} can help to find the relation between stages and products and graph coloring methods can offer a lower cost way for product. We can set up stages as vertices, relation as arcs, and weights of vertices as the ordering (or prioritization) of all vertices. For example, there are five stages $A$ (stage of watching raw material), $B_1, B_2$ (stages of drying material), $C_1, C_2$ (stages of viscosity), $T$ (stage polymerization), and the relations $AB_1, AB_2, B_1B_2, B_1C_1, B_2C_2, C_1C_2, C_2T$ and a weighted function $w$ (the ordering in the processing) with $w(A)<w(B_1)=w(B_2) <w(C_1)=w(C_2)<w(T)$ in a chemical processing. By POC, we have $c(A)=1, c(B_1)=2, c(B_2)=c(C_1)=3, c(C_2)=4, c(T)=5$. The coloring $c$ can offer the minimum number of steps to run this processing (or to control the flow of this processing).

\section{Upper bounds for $f(G)$}\label{upper-f-G-sec}

In this section, we provide the following rather simple algorithm, thereby proving Theorem~\ref{better-bound-f}. In what follows, the neighbourhood $N(v)$ of a vertex $v$ is the set of all vertices adjacent to $v$. 

\begin{mdframed}

\vspace{3mm}
\centerline{\bf Algorithm F}

\medskip

\noindent
{\bf Input:} $(G,w)$, where $V(G)=\{v_1,\ldots,v_n\}$ and $w(v_1)\leq\cdots\leq w(v_n)$;\\

\noindent
{\bf Output:} a POC with at most $\ell:=\ell(G)$ colors.\\[-3mm]

\begin{description}
\item{{\bf Step 1.}} Set $c(v_1)=1$. 

\item{
{\bf Step 2.}} For $j=2,\ldots ,n$, if $N(v_j)\cap (\cup_{i=1}^{j-1}\{v_i\})=\emptyset$, then set $c(v_j)=1$; otherwise, set  $c(v_j)=\max\{c(v_i)| v_iv_j\in E(G)$ and $1\le i<j\}+1$. 
\end{description}
\end{mdframed}

\begin{theorem}\label{alg-F-prime-theorem} For any $(G,w)$, where $G$ is of order $n$, algorithm F indeed yields a POC with at most $\ell:=\ell(G)$ colors.
\end{theorem}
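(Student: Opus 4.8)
The plan is to verify two properties of the coloring $c$ produced by Algorithm F: first, that $c$ is a valid POC on $(G,w)$; and second, that $c$ uses at most $\ell(G)$ colors. Both rest on a single structural observation about the algorithm, which I would establish at the outset: whenever $v_iv_j\in E(G)$ with $i<j$, the color assigned to $v_j$ strictly exceeds that of $v_i$, i.e.\ $c(v_j)>c(v_i)$. This is immediate from Step 2, since in that case $v_i$ is one of the earlier-indexed neighbours of $v_j$, so by construction $c(v_j)\ge c(v_i)+1$.

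For the POC property, I would take an arbitrary edge and assume without loss of generality that it is $v_iv_j$ with $i<j$. Because the input is sorted by non-decreasing weight, we have $w(v_i)\le w(v_j)$. If $w(v_i)<w(v_j)$, then $v_j$ is the heavier endpoint, and the observation gives $c(v_j)>c(v_i)$, as the first POC condition requires. If $w(v_i)=w(v_j)$, the same observation gives $c(v_j)\ne c(v_i)$, which is exactly what the equal-weight condition demands. Thus both defining conditions of a POC hold on every edge.

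For the color bound, I would show that every color value is witnessed by a correspondingly long path in $G$. Concretely, I claim that if $c(v_j)=k$ then $G$ contains a path on $k$ vertices ending at $v_j$. I would prove this by downward recursion on the color value: when $k\ge 2$, Step 2 forces a neighbour $v_i$ of $v_j$ with $i<j$ and $c(v_i)=k-1$, and I would append $v_j$ to a $(k-1)$-vertex path ending at $v_i$; the base case $k=1$ is the single vertex $v_j$. The crucial point is that the indices encountered along this backward trace are strictly decreasing, so no vertex can repeat and the resulting walk is a genuine simple path. Hence, if $k^\ast$ denotes the largest color used, there is a path on $k^\ast$ vertices, giving $k^\ast\le\ell(G)$.

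Both parts are short, and the step requiring the most care is the path construction in the color-bound argument: one must confirm that following neighbours of strictly smaller color yields a simple path rather than merely a walk. This is precisely where the strict monotonicity of the indices does the work, so I would state that monotonicity explicitly and treat it as the crux of the proof.
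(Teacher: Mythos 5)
Your proposal is correct and follows essentially the same route as the paper: the key step in both is tracing back from a vertex of color $k$ through neighbours of successively smaller color to exhibit a $k$-vertex path, giving $k\le\ell(G)$. Your write-up is more explicit than the paper's (it verifies the POC property on each edge and justifies simplicity of the traced path via the strictly decreasing indices), but these are details the paper leaves implicit rather than a different argument.
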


\begin{proof} By the construction of algorithm F, note that, for any vertex $v$ of $G$, there exists a maximal path $P=p_1p_2\ldots p_t$ in $G$ such that $c(p_1)=1$ and $v=p_t$ and $c(p_{i+1})=c(p_i)+1$ for $i=1,2,\ldots , t-1$. So we have $t\le \ell(G)$.  
Thus, from algorithm F we obtain a POC with at most $\ell$ colors.
\end{proof}

Theorem~\ref{better-bound-f} is an immediate corollary of Theorem~\ref{alg-F-prime-theorem}.

\section{The function $\chi_{POC}(G;t)$ in complete multipartite graphs}\label{POC-G-t-sec}

We start with the following result on $\chi_{POC}(G;t)$  for complete bipartite graphs. 

\begin{theorem}\label{compl-bipart-theorem} For $1\leq m\leq n$, if $t\geq 2m+1$, then $\chi_{POC}(K_{m,n};t)=\min\{m+n, 2m+1\}$.\end{theorem}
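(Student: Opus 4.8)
The plan is to prove the two inequalities $\chi_{POC}(K_{m,n};t)\le \min\{m+n,2m+1\}$ and $\chi_{POC}(K_{m,n};t)\ge \min\{m+n,2m+1\}$ separately, after first observing that $\min\{m+n,2m+1\}$ is exactly $\ell(K_{m,n})$. Writing $A$ and $B$ for the parts of sizes $m\le n$, every path in $K_{m,n}$ alternates between $A$ and $B$, so a path on $k$ vertices uses $\lceil k/2\rceil$ vertices from one side and $\lfloor k/2\rfloor$ from the other; feasibility forces $\lfloor k/2\rfloor\le m$ and $\lceil k/2\rceil\le n$. Hence a longest path uses all $m$ vertices of $A$ together with $\min\{m+1,n\}$ vertices of $B$, giving $\ell(K_{m,n})=\min\{2m+1,\,m+n\}$; in particular a Hamiltonian path exists precisely when $n\le m+1$. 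This identifies the right-hand side of the statement with $\ell(K_{m,n})=f(K_{m,n})$, the latter equality being Theorem~\ref{better-bound-f}.

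For the upper bound I would use Algorithm F directly. For every weight function $w$ on $K_{m,n}$, with any number of distinct weights, Theorem~\ref{alg-F-prime-theorem} guarantees a POC of $(K_{m,n},w)$ using at most $\ell(K_{m,n})$ colors, so $\chi_{POC}(K_{m,n},w)\le \min\{m+n,2m+1\}$. Consequently the value $p=\min\{m+n,2m+1\}$ already satisfies the condition defining $\chi_{POC}(K_{m,n};t)$, namely $\chi_{POC}(K_{m,n},w)\le p$ for every admissible $w$, so $\chi_{POC}(K_{m,n};t)\le \min\{m+n,2m+1\}$. Note that this direction does not use the hypothesis $t\ge 2m+1$.

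For the lower bound it suffices to exhibit, for each admissible $t$ (that is, $t\le |V(K_{m,n})|=m+n$), one weighting $w^{*}$ with $|W|=t$ and $\chi_{POC}(K_{m,n},w^{*})\ge \ell(K_{m,n})$. Labelling $A=\{a_1,\dots,a_m\}$, $B=\{b_1,\dots,b_n\}$, and putting $r=\min\{m+1,n\}$, I would set $w^{*}(b_i)=2i-1$ for $1\le i\le r$ and $w^{*}(a_i)=2i$ for $1\le i\le m$. Along the alternating path $b_1a_1b_2a_2\cdots$ on $\ell(K_{m,n})$ vertices the weights read $1,2,3,\dots,\ell(K_{m,n})$ and are strictly increasing, so, since any POC of $(K_{m,n},w^{*})$ restricts to a POC of this weighted path, it must use at least $\ell(K_{m,n})$ colors, exactly as noted in the remark preceding Theorem~\ref{better-bound-f}. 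The $n-r$ remaining vertices of $B$ are pairwise nonadjacent and may receive arbitrary weights; using $2m+1\le t\le m+n$, I assign them fresh distinct large values until exactly $t$ distinct weights occur, which is feasible because $n-r\ge t-(2m+1)$ whenever $t\le m+n$. This gives $\chi_{POC}(K_{m,n};t)\ge \ell(K_{m,n})=\min\{m+n,2m+1\}$.

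The step I expect to require the most care is the bookkeeping in the lower bound: arranging the forcing path to carry strictly increasing weights while simultaneously using exactly $t$ distinct weights, and treating the degenerate regime $n\le m+1$, where $m+n=2m+1$ can hold only for $n=m+1$ and the construction above reduces to a Hamiltonian path weighted $1,2,\dots,m+n$ (the case $n=m$ makes the hypothesis $t\ge 2m+1>|V|$ vacuous and needs no construction). The hypothesis $t\ge 2m+1$ enters precisely here: it guarantees that the alternating path of $\ell(K_{m,n})\le 2m+1$ vertices can be realized with strictly increasing weights, and $2m+1$ is exactly the number of distinct weights needed to drive $\chi_{POC}$ up to the ceiling $f(K_{m,n})$.
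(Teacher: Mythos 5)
Your proof is correct, and for the upper bound it takes a genuinely different (and shorter) route than the paper. The paper invokes Theorem~\ref{better-bound-f} only in the Hamiltonian-path case $n\le m+1$; for $n\ge m+2$ it instead builds an explicit POC with at most $2m+1$ colors by interleaving the partite sets by weight (partitioning $X$ into $X_1,\ldots,X_m$ and $Y$ into $Y_1,\ldots,Y_{m+1}$ and coloring $X_i$ with $2i$, $Y_i$ with $2i-1$), a construction that previews the MOC-based colorings of Section~\ref{POC-G-t-sec}. You observe that this explicit construction is dispensable: since $f(G)=\ell(G)$ for every graph, every weighting of $K_{m,n}$ admits a POC with at most $\ell(K_{m,n})=\min\{m+n,2m+1\}$ colors via Algorithm F, so the upper bound follows from Theorem~\ref{alg-F-prime-theorem} once $\ell(K_{m,n})$ is computed. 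Your lower bound is the same as the paper's (weights strictly increasing along a longest path), and you are in fact more careful than the paper on two points: you pad the forcing weighting to use exactly $t$ distinct values when $t\le m+n$, and you flag the degenerate case $n=m$, where $t\ge 2m+1>|V(K_{m,n})|$ leaves no surjective weighting at all; the paper glosses over both, implicitly relying on its earlier monotonicity remark under which a weighting need not use all $t$ values. Both arguments are sound; yours makes explicit that the theorem amounts to $\chi_{POC}(K_{m,n};t)=\ell(K_{m,n})$ whenever $t\ge\ell(K_{m,n})$, at the cost of not exhibiting the concrete optimal coloring that the paper's version provides.
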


\begin{proof} 
If $m\le n\le m+1$, then $K_{m,n}$ contains a Hamiltonian path. In this case, by Theorem~\ref{better-bound-f}, we have $\chi_{POC}(K_{m,n};t)=\min\{m+n, 2m+1\}=m+n$. 

Thus we may assume that $m+2\le n$.  
Since $K_{m,n}$ has a longest path of order $2m+1$, say $P=v_1\ldots v_{2m+1}$, for any $(K_{m,n},w)$ such that $w(v_1)<\cdots<w(v_{2m+1})$, we have $\chi_{POC}(K_{m,n},w)\geq 2m+1$. Thus, $\chi_{POC}(K_{m,n};t)\geq 2m+1$. 

To show that $\chi_{POC}(K_{m,n};t)\leq 2m+1$, let $(X,Y)$ be the partite set of $K_{m,n}$ such that $|X|=m$ and $|Y|=n$. For $(K_{m,n},w)$ we can assume that $X$ and $Y$ can be partitioned into parts $X=X_1\cup\cdots\cup X_m$ and $Y=Y_1\cup\cdots\cup Y_{m+1}$ such that
\begin{itemize}
\item $X_i\neq \emptyset$ for all $1\leq i\leq m$, but some of $Y_i$ can be empty;
\item $\max\{w(x)\ |\ x\in X_i\} \leq \min\{w(x')\ |\ x'\in X_{i+1}\}$ for $i=1,\ldots,m-1$;
\item $\max\{w(y)\ |\ y\in Y_i\} \leq \min\{w(y')\ |\ y'\in Y_{i+1}\}$ for $i=1,\ldots,m$; and
\item $\max\{w(y)\ |\ y\in Y_i\} \leq \min\{w(x)\ |\ x\in X_i\} \leq \max\{w(x)\ |\ x\in X_i\} \leq \min\{w(y)\ |\ y\in Y_{i+1}\}$ for $i=1,\ldots,m$.
\end{itemize}

Let $c:V\rightarrow N$ be the vertex coloring such that $c(x)=2i$ for $x\in X_i$ and $c(y)=2i-1$ for $y\in Y_i$. Note that some of colors in $C$ may not be used because $Y_i$ can be the empty set for some $i$. In any case, by the construction, $c$ is a POC on $(K_{m,n},w)$ such that $|C|\leq 2m+1$. Hence, $\chi_{POC}(K_{m,n};t)\leq 2m+1$ and the theorem is proved.
\end{proof}

We now turn our attention to a more general case, namely, complete multipartite graphs. Unlike Theorem~\ref{compl-bipart-theorem}, it seems difficult to provide a simple formula for $\chi_{POC}(G;t)$ in such general cases. To state our results, we give some preliminaries. 

Suppose that $G$ is a vertex colored graph by $c: V(G)\rightarrow C$ and let $S$ be a subset of $V(G)$ of $s$ vertices. 
If vertices $x_1,\ldots, x_s$ of $S$ can be ordered as $c(x_i)=c(x_{i-1})+1$ for $i=2,\ldots,s$, then we say that $S$ \textit{is consecutively colored}; in particular, when we want to specify the minimum value on $c$, we say that $S$ \textit{is consecutively colored from} $c(x_1)$. Also, for a consecutively colored subset $S$, we sometimes want to specify $x_1$ or $x_s$. In that case, we say that $S$ \textit{is consecutively colored from $x_1$ with color $c(x_1)$ to $x_s$}. Note that we do not need to mention  $c(x_s)$ because we can see the value as long as we know both $c(x_1)$ and $|V(S)|(=s)$ by the assumption that $S$ is consecutively colored.  

Now, for $k\ge 2$, we consider a POC on weighted complete multipartite graphs $(K_{n_1,\ldots, n_k}, w)$. To state our result precisely, we give the following notation. Let  $(K_{n_1,\ldots, n_k}, w)$ be a weighted complete multipartite graph such that $w: V(K_{n_1,\ldots, n_k}) \rightarrow \{1,\ldots ,t\}$.  For $i=1,\ldots, t$, let $H_i$ be a complete subgraph of $K_{n_1,\ldots ,n_k}$ such that $w(x)=i$ for all $x\in H_i$. The subgraphs $H_1,\ldots , H_t$ are called \textit{maximum ordered cliques} (briefly, {\em MOCs}), if $H_1,\ldots , H_t$ are chosen so that $\displaystyle\sum_{i=1}^t|V(H_i)|$ is as large as possible in $K_{n_1,\ldots ,n_k}$. Note that, any MOCs $H_1,\ldots , H_t$ satisfy $H_i\neq\emptyset$ for every $1\le i\le t$ by the assumption that $(G,w)$ contains a vertex $v_j$ such that $w(v_j)=j$ for every $1\le j\le t$. For MOCs $H_1,\ldots , H_t$, we can find a subgraph $\mathcal{S}(H_1,\ldots , H_t)$ of $\overline{K_{n_1,\ldots ,n_k}}$ (the complement of $K_{n_1, \ldots ,n_k}$) having the following three properties (i)--(iii): 

\begin{description}
\item{(i)} Each component $P$ of $\mathcal{S}(H_1,\ldots , H_t)$ forms a path $P=p_1\ldots p_s$ with $s\ge 2$ such that $V(P)$ is contained in a partite set of $K_{n_1,\ldots ,n_k}$ (equivalently, $V(P)$ forms isolated vertices in $K_{n_1,\ldots ,n_k}$) and $w(p_i)=w(p_{i-1})+1$ holds for $i=2,\ldots ,s$ (equivalently, $p_i\in H_{w(p_{i-1})+1}$ holds for $i=2,\ldots ,s$). Moreover, $s$ can be greater than $2$ only if $V(H_{w(p_i)})=\{p_i\}$ holds for every $i$ with $2\le i\le s-1$. (Since $H_1,\ldots ,H_t$ are cliques and $V(P)$ is contained in a partite set of $K_{n_1,\ldots ,n_k}$, note that $|V(H_i)\cap V(P)|\leq 1$ holds for every $1\leq i\leq t$.)
\item{(ii)} For any pair of components $P, P'$ in $\mathcal{S}(H_1,\ldots , H_t)$, there exists at most one clique $H_i$ among $H_1,\ldots ,H_t$ such that $V(P)\cap V(H_i)\neq\emptyset$ and $V(P')\cap V(H_i)\neq\emptyset$.  
\item{(iii)} For $i=1,\ldots ,t$, $|H_i\cap V(\mathcal{S}(H_1,\ldots , H_t))|\le 2$.
\end{description}
  
It is possible that $\mathcal{S}(H_1,\ldots , H_t)=\emptyset$ when there is no such subgraph in $\overline{K_{n_1,\ldots ,n_k}}$ for MOCs $H_1,\ldots , H_t$. Thus we can take $\mathcal{S}(H_1,\ldots , H_t)$ for any MOCs $H_1,\ldots , H_t$. 
By the construction, $\mathcal{S}(H_1,\ldots , H_t)$ forms a union of vertex-disjoint paths. 

As observed in the proofs of Propositions~\ref{p1} and~\ref{p2}, this special subgraph is useful to save the number of colors needed to give a POC on the weighted complete multipartite graph. In fact we will give a vertex coloring so that each component of $\mathcal{S}(H_1,\ldots , H_t)$ has the same color and this is the key idea for saving the number of colors to obtain a desired POC.  

Note that $0\le |V(\mathcal{S}(H_1,\ldots , H_t))|\le 2t-2$, and the upper bound can be attained only if $|V(H_1)\cap V(\mathcal{S}(H_1,\ldots , H_t))|=|V(H_t)\cap V(\mathcal{S}(H_1,\ldots , H_t))|=1$ and $|V(H_i)\cap V(\mathcal{S}(H_1,\ldots , H_t))|=2$ for $i=2,\ldots ,t-1$. 
We say that $\mathcal{S}(H_1,\ldots , H_t)$ is \textit{maximum $(H_1,\ldots ,H_t)$-paths} if it is chosen in such a way that $|V(\mathcal{S}(H_1,\ldots , H_t))|$ is as large as possible in $\overline{K_{n_1,\ldots ,n_k}}$. For maximum $(H_1,\ldots ,H_t)$-paths $\mathcal{S}(H_1,\ldots , H_t)$, let $q(\mathcal{S}(H_1,\ldots , H_t))$ be the number of components in $\mathcal{S}(H_1,\ldots , H_t)$. 

We first prove the following proposition.

\begin{proposition}~\label{p1}
Let $n_1,\ldots, n_k$ be positive integers with $k\ge 2$ and $(K_{n_1,\ldots ,n_k}, w)$ be a vertex weighted graph of $K_{n_1,\ldots ,n_k}$ such that $w: V(G)\rightarrow \{1,\ldots ,t\}$.
Also, let $H_1,\ldots ,H_t$ be a MOCs of $(K_{n_1,\ldots ,n_k}, w)$ and $\mathcal{S}(H_1,\ldots , H_t)$ be a maximum $(H_1,\ldots ,H_t)$-paths of $\overline{K_{n_1,\ldots ,n_k}}$. 
Then, there exists a POC on the graph $(K_{n_1,\ldots ,n_k}, w)$ that uses $\displaystyle\sum_{i=1}^t|V(H_i)|-|V(\mathcal{S}(H_1,\ldots , H_t))|+q(\mathcal{S}(H_1,\ldots , H_t))$ colors. 
\end{proposition}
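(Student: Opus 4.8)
The plan is to exhibit an explicit POC attaining the claimed count by colouring \emph{color classes} rather than individual vertices. Write $r_i:=|V(H_i)|$; since $K_{n_1,\ldots,n_k}$ is complete multipartite, a maximum clique of weight-$i$ vertices takes one vertex from each part meeting level $i$, so $r_i$ is exactly the number of parts containing a vertex of weight $i$. First I would record the \emph{baseline} coloring: for each level $i$, give the (nonempty) parts at level $i$ the value $r_i$ distinct colors, each part monochromatic, with all colors of level $i+1$ exceeding all colors of level $i$. This is visibly a POC using $\sum_{i=1}^t r_i=\sum_{i=1}^t|V(H_i)|$ colors. The construction then \emph{merges}: for each component $P=p_1\ldots p_s$ of $\mathcal{S}(H_1,\ldots,H_t)$, which lies in a single part $A$ and spans consecutive levels $\alpha:=w(p_1),\ldots,\beta:=w(p_s)$, I recolor every part-$A$ vertex of weight in $[\alpha,\beta]$ with one common color. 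Because $V(\mathcal{S})\subseteq\bigcup_i V(H_i)$ (property~(i) gives $p_i\in H_{w(p_i)}$, and a clique has a unique representative in each part), distinct components meet disjoint families of baseline classes; hence each component collapses its $|V(P)|$ baseline classes into one, the color count drops by $\sum_P(|V(P)|-1)=|V(\mathcal{S})|-q(\mathcal{S})$, and the stated total follows. All the content is in checking that the merged assignment is still a POC and still separates classes wherever the definition demands.

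The key structural step, where property~(i) enters, is that a merged class behaves like a single point wedged between two consecutive levels. A cross-part vertex $v$ (in a part $B\neq A$) is adjacent to \emph{every} vertex of the merged class $X$, so if $v$ had weight strictly between $\alpha$ and $\beta$ it would be forced simultaneously above the colors of $X$'s lighter vertices and below those of its heavier vertices, which is impossible. Property~(i) rules this out precisely: at every strictly intermediate level $m$ one has $V(H_m)=\{p_m\}$, so $A$ is the only nonempty part at level $m$ and no such $v$ exists. Consequently the cross-part neighbors of $X$ split cleanly into those of weight $\le\alpha$, which must get smaller colors than $X$, and those of weight $\ge\beta$, which must get larger colors, with no constraint inside the interval; this is exactly what justifies giving the whole path a single color.

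I would then reduce everything to producing a real-valued potential $\phi$ on the color classes that strictly respects every order forced by the POC definition, together with injectivity to handle the ``equal-weight adjacent classes must differ'' clause; coloring by the rank of $\phi$ finishes the proof, and the mere existence of such a $\phi$ certifies that the forced-order relation is acyclic. I set $\phi(\text{unmerged class at level }m\text{ in part }B)=m+\sigma(B)\varepsilon$ (with $\sigma$ an arbitrary per-level tie-break and $\varepsilon$ tiny) and $\phi(X)=\alpha+\tfrac12$ for a merged class $X$ with tail level $\alpha$. The unmerged--unmerged and unmerged--merged checks are routine given the previous paragraph. The hard part will be the merged--merged pairs: for $X,Y$ in parts $A\neq B$ spanning $[\alpha,\beta]$ and $[\gamma,\delta]$, I must show at most one of the two orders is forced and that it agrees with the order of $\alpha,\gamma$. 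This is where property~(ii) is indispensable: $X<Y$ is forced iff $\alpha<\delta$ and $Y<X$ iff $\gamma<\beta$, and a short computation shows that if both held the intervals would overlap in two consecutive levels, i.e.\ $X$ and $Y$ would meet two common cliques $H_i$, contradicting~(ii); once only one order survives, say $X<Y$, then $\gamma\ge\beta>\alpha$ gives $\phi(X)<\phi(Y)$. Since two distinct merged classes never share a tail level, $\phi$ is also injective, settling the ``must differ'' requirements.

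Finally I would assemble the bookkeeping: every vertex lies in exactly one (merged or unmerged) class, the number of classes is $\sum_i|V(H_i)|-|V(\mathcal{S})|+q(\mathcal{S})$, and assigning colors $1,2,\ldots$ in increasing order of $\phi$ yields a POC with exactly that many colors. Property~(iii) is part of the standing hypotheses on $\mathcal{S}$ but, as the above shows, is not needed for this existence statement; it will instead be used for the matching lower bound in Proposition~\ref{p2}. The genuine obstacle throughout is the merged--merged ordering, which is controlled entirely by property~(ii).
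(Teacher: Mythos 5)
Your construction produces exactly the same coloring as the paper's: the color classes are the (part, level) pairs refining the MOCs, each component of $\mathcal{S}(H_1,\ldots,H_t)$ is collapsed to a single class, and the non-MOC vertices inherit the color of their part's representative in the corresponding clique $H_{w(v)}$ --- which is precisely the paper's final ``assign $c(u)$ to $v$'' step. The paper presents this as a sequential consecutive-coloring procedure and simply asserts that the result is a POC, whereas you verify the POC conditions explicitly via the potential $\phi$ and correctly isolate where properties (i) and (ii) enter; this is the same approach, carried out with more care.
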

\begin{proof}
Let $q:=q(\mathcal{S}(H_1,\ldots , H_t))$ and $P_1=p^1_1\ldots p^1_{|V(P_1)|},\ldots, P_q=p^q_1\ldots p^q_{|V(P_q)|}$ be components of $\mathcal{S}(H_1,\ldots , H_t)$ such that 
$w(p^i_1)=\min \{w(x)|\ x\in V(P_i)\}$ and $w(p^i_{|V(P_i)|})=\max \{w(x)|\ x\in V(P_i)\}$ for $i=1,\ldots ,q$ and  $w(p^i_{|V(P_i)|})\le w(p^{i+1}_1)$ for $i=1,\ldots ,q-1$ (so that $P_1,\ldots , P_q$ appear in this order from $H_1$ to $H_t$). 

We now give a vertex-coloring by the following manner: 
For each component $P_i$ of $\mathcal{S}(H_1,\ldots , H_t)$, we assign the same color to all vertices in $P_i$. Therefore, in the following argument, we will only mention the color of one vertex of $P_i$.  

We will basically give a consecutive coloring on $H_i$ from $i=1$ to $t$ successively so that 
$\max\{c(x)|\ x\in V(H_{i-1})\}\le \min\{c(y)|\ y\in V(H_i)\}$ holds for $i=2,\ldots ,t$, where the equality holds only if there exists a component $P$ of $\mathcal{S}(H_1,\ldots , H_t)$ such  that $V(P)\cap V(H_{i-1})\neq\emptyset$ and $V(P)\cap V(H_i)\neq\emptyset$. 
Thus, we start with giving a consecutive coloring on $H_1$. 
If $V(H_1)\cap V(P_1)=\emptyset$ then give an arbitrary consecutive coloring on $H_1$ and continue to give a consecutive coloring on $H_2, \ldots, H_{j-1}$ successively until we have some $H_j$ such that $p^1_1\in V(H_j)$ for the component  $P_1=p^1_1\ldots p^1_{|V(P_1)|}$ of $\mathcal{S}(H_1,\ldots , H_t)$. Note that, if $\mathcal{S}(H_1,\ldots , H_t)=\emptyset$ then we just give consecutive colorings successively on $H_i$ from $i=1$ to $t$ and then we are done. 
So we now assume that $p^1_1\in V(H_j)$ with $j\ge 1$ for $P_1=p^1_1\ldots p^1_{|V(P_1)|}$. 
 If $V(H_j)=\{p^1_1\}$, then we assign color $\displaystyle\sum_{i=1}^{j-1}|V(H_i)|+1$ on $p^1_1$. Otherwise, give a consecutive coloring on $H_j$ from a vertex of $H_1-P_1$ to $p^1_1$. 
From $i=j+1$ to $t$, we successively give a consecutive coloring on $H_i$ by the following manner: For a component $P_i=p^i_1\ldots p^i_{|V(P_i)|}$ of $\mathcal{S}(H_1,\ldots , H_t)$, once we have assigned a color on $p^i_{j-1}$ for some $j\ge 2$, we continue to assign the same color on $p^i_{j}$ until all the vertices of $P_i$ receive the same color. Keeping this coloring procedure in mind, we only have  to consider the following three cases on coloring of $H_i$. 
\begin{description}
\item{$\bullet$}
If $H_i$ does not contain any vertex of $V(\mathcal{S}(H_1,\ldots , H_t))$ that has already been colored just after the coloring procedure on $H_{i-1}$, then we give a consecutive coloring on $H_i$ from a vertex $v\in V(H_i)$ with color $\max\{c(x)|\ x\in \displaystyle\cup_{j=1}^{i-1}V(H_j)\}+1$. In this case, if possible, choose the vertex $v$ so that $v\notin V(\mathcal{S}(H_1,\ldots , H_t))$. (Thus, we would like to color from a vertex $v$ so that $v\notin V(\mathcal{S}(H_1,\ldots , H_t))$ to a vertex in  $V(\mathcal{S}(H_1,\ldots , H_t))$ if it contains a vertex for the next path.) 
\item{$\bullet$} If $H_i$ contains a vertex $v$ of $V(\mathcal{S}(H_1,\ldots , H_t))$ that has already been colored by the coloring procedure and $V(H_i)\cap V(\mathcal{S}(H_1,\ldots , H_t))=\{v\}$, then we give a consecutive coloring on $H_i$ from $v$. 
\item{$\bullet$} If $H_i$ contains a vertex $v$ of $V(\mathcal{S}(H_1,\ldots , H_t))$ that has already been colored by the coloring procedure and $V(H_i)$ contains another (uncolored) vertex $u$ from $V(\mathcal{S}(H_1,\ldots , H_t))$, then we give a consecutive coloring on $H_i$ from $v$ to~$u$. 

\end{description}

Proceeding in this way, in view of the properties (i)--(iii) on $\mathcal{S}(H_1,\ldots , H_t)$, we can give a coloring on all the vertices of $H_1,\ldots ,H_t$. We then assign colors for the other vertices in $K_{n_1,\ldots ,n_k}$. For a vertex $v\in V(K_{n_1,\ldots ,n_k})-V(H_1\cup\ldots \cup H_t)$, there exists a vertex $u\in V(H_j)$ for some $1\le j\le t$ such that $(V(H_j)-\{u\})\cup\{v\}$ induces a clique in $K_{n_1,\ldots ,n_k}$ since $H_1,\ldots , H_t$ are a MOCs. Assign the same color $c(u)$ on $v$.  

We can easily see from the above construction that the resulting coloring is a desired POC. \end{proof}

For the convenience of the readers,  we will now demonstrate giving a POC on $(K_{1,3,5}, w)$ by the coloring procedure described in the poof of Proposition~\ref{p1} in the following case: Let $X=\{x_1\}, Y=\{y_1, y_2, y_3\}, Z=\{z_1, z_2, z_3, z_4, z_5\}$ be the three partite sets of $K_{1,3,5}$ and suppose that $w(x_1)=w(y_1)=w(z_1)=1, w(z_2)=w(z_5)=2, w(y_2)=w(z_3)=3, w(y_3)=w(z_4)=4$. In this case, we can take a MOCs $H_1, H_2, H_3, H_4$ such that $V(H_1)=\{x_1, y_1, z_1\}, V(H_2)=\{z_2\}, V(H_3)=\{y_2, z_3\}, V(H_4)=\{y_3, z_4\}$. Note that $V(K_{1,3,5})-V(H_1\cup H_2\cup H_3\cup H_4)=\{z_5\}$. Let $P_1=z_1z_2z_3, P_2=y_2y_3$ be paths in $\overline{K_{1,3,5}}$.  Then we can let $\mathcal{S}(H_1, H_2, H_3, H_4)=P_1\cup P_2$. 
According to the coloring procedure in the proof of Proposition~\ref{p1}, we can color the vertices as follows: $c(x_1)=1, c(y_1)=2$; any vertex of $P_1$ receives color $3$; any vertex of $P_2$ receives color $4$; $c(z_4)=5, c(z_5)=2$. Note that $q(\mathcal{S}(H_1, H_2, H_3, H_4))=2$ and now we obtained a POC on $(K_{1,3,5}, w)$ using $\sum_{i=1}^4|V(H_i)|-|V(\mathcal{S}(H_1,H_2,H_3, H_4))|+q(\mathcal{S}(H_1,H_2,H_3, H_4))=8-5+2=5$ colors. 

Let $(K_{n_1,\ldots ,n_k}, w)$ be a weighted complete multipartite graph, where $w$ is a weight function such that $w: V(K_{n_1,\ldots ,n_k}) \rightarrow \{1, \ldots ,t\}$. 
For the fixed integers $n_1,\ldots ,n_k$, we can obtain a fixed value $g(n_1,\ldots ,n_k, t; w)$ as follows: 
\begin{center}
 $g(n_1,\ldots ,n_k, t; w)=\max\{\displaystyle\sum_{i=1}^t|V(H_i)|-|V(\mathcal{S}(H_1,\ldots , H_t))|+q(\mathcal{S}(H_1,\ldots , H_t)) \ | \ H_1,\ldots , H_t$ are a MOCs in $(K_{n_1,\ldots ,n_k}, w) \}$. 
\end{center}
Under this notation, we further define the following function on $K_{n_1,\ldots ,n_k}$: 
\begin{center}
$h(n_1,\ldots ,n_k,t):=\max\{g(n_1,\ldots ,n_k,t ;w)|\ w$ is a weight function on $K_{n_1,\ldots ,n_k}$ such that $w: V(K_{n_1,\ldots ,n_k}) \rightarrow \{1, \ldots ,t\}\}$.  
\end{center}
\begin{proposition}~\label{p2}
Let $n_1,\ldots, n_k$ be positive integers with $k\ge 2$. Then, $$\chi_{POC}(K_{n_1,\ldots ,n_k}; t)=h(n_1,\ldots ,n_k,t).$$

\end{proposition}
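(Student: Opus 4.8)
The plan is to establish the two inequalities $\chi_{POC}(K_{n_1,\ldots,n_k};t)\le h(n_1,\ldots,n_k,t)$ and $\chi_{POC}(K_{n_1,\ldots,n_k};t)\ge h(n_1,\ldots,n_k,t)$ separately, and the upper bound is essentially already in hand. For the upper bound, recall that $\chi_{POC}(G;t)$ is defined as the \emph{minimum} $p$ such that \emph{every} weighting $w$ with $|W|=t$ admits a POC with at most $p$ colors; so it suffices to exhibit, for each such $w$, a POC using at most $h$ colors. Fixing $w$, I would choose MOCs $H_1,\ldots,H_t$ and maximum $(H_1,\ldots,H_t)$-paths $\mathcal{S}(H_1,\ldots,H_t)$ realizing the maximum in the definition of $g(n_1,\ldots,n_k,t;w)$, and then invoke Proposition~\ref{p1}: it produces a POC on $(K_{n_1,\ldots,n_k},w)$ using exactly $\sum_{i=1}^t|V(H_i)|-|V(\mathcal{S})|+q(\mathcal{S})=g(n_1,\ldots,n_k,t;w)\le h(n_1,\ldots,n_k,t)$ colors. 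Since this bounds $\chi_{POC}(K_{n_1,\ldots,n_k},w)$ for every $w$ with $|W|=t$, taking the maximum over $w$ yields $\chi_{POC}(K_{n_1,\ldots,n_k};t)\le h(n_1,\ldots,n_k,t)$.

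The lower bound is the substantive direction. Here I must produce a single weighting $w^\ast$ with $|W|=t$ for which \emph{no} POC uses fewer than $h(n_1,\ldots,n_k,t)$ colors; equivalently, I must show $\chi_{POC}(K_{n_1,\ldots,n_k},w^\ast)\ge h$. The natural candidate is a weighting $w^\ast$ achieving the outer maximum defining $h$, i.e.\ with $g(n_1,\ldots,n_k,t;w^\ast)=h$. The goal is then to argue that any POC on $(K_{n_1,\ldots,n_k},w^\ast)$ is forced to use at least $g(n_1,\ldots,n_k,t;w^\ast)$ colors. The key structural fact to exploit is that in a POC, vertices of strictly smaller weight must receive strictly smaller colors whenever they are adjacent, and within each weight class the induced graph is a disjoint union of cliques (the intersections of that weight level with the partite sets), forcing distinct colors inside each clique. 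Concatenating the cliques $H_1,\ldots,H_t$ up the weight order gives a chain along which colors must strictly increase across consecutive distinct-weight adjacent pairs, so $\sum_i|V(H_i)|$ is a first crude count; the refinement is that color-repetitions are possible \emph{only} along the complement-paths in $\mathcal{S}$, since two vertices may share a color precisely when they are non-adjacent (in a common partite set) and sit at consecutive weight levels in the forced chain. I would make this precise by a counting argument: fixing any POC $c$, I would build the graph on $H_1\cup\cdots\cup H_t$ recording pairs of equal color, show these equalities form paths of the type allowed in the definition of $\mathcal{S}$ (properties (i)--(iii)), and conclude that the number of colors used on $H_1\cup\cdots\cup H_t$ is at least $\sum_i|V(H_i)|$ minus the number of ``merges,'' which is bounded by $|V(\mathcal{S})|-q(\mathcal{S})$ by maximality of $\mathcal{S}$.

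The main obstacle will be the lower bound's combinatorial bookkeeping: proving that the color-collisions available to \emph{any} POC cannot exceed the savings $|V(\mathcal{S})|-q(\mathcal{S})$ encoded by a \emph{maximum} $(H_1,\ldots,H_t)$-paths system. The delicate point is that an adversarial POC might try to reuse colors across vertices that do \emph{not} form a legal $\mathcal{S}$-path—for instance collapsing colors in a pattern violating property (ii) or (iii)—and I must rule this out using the POC constraints together with the clique structure of each $H_i$. I expect the cleanest route is to show that, given an optimal POC $c$, one can read off from its equal-color classes a subgraph $\mathcal{S}_c$ of $\overline{K_{n_1,\ldots,n_k}}$ satisfying (i)--(iii), whence $|V(\mathcal{S}_c)|-q(\mathcal{S}_c)\le |V(\mathcal{S})|-q(\mathcal{S})$ by maximality; combined with the forced strict increases this gives the number of colors $\ge \sum_i|V(H_i)|-|V(\mathcal{S})|+q(\mathcal{S})=h$. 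Verifying that $\mathcal{S}_c$ genuinely satisfies all three properties—especially that two distinct color classes cannot ``interleave'' across more than one clique—is where the care is required, and it is here that the precise definitions of MOCs and maximum $(H_1,\ldots,H_t)$-paths must be used in full.
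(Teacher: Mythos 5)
Your proposal follows essentially the same route as the paper: the upper bound is exactly the paper's application of Proposition~\ref{p1} to each weighting $w$ (giving $\chi_{POC}(K_{n_1,\ldots,n_k},w)\le g(n_1,\ldots,n_k,t;w)\le h$), and the lower bound is the same counting argument that any POC on the subgraph induced by $\bigcup_i V(H_i)$ must use at least $\sum_i|V(H_i)|-|V(\mathcal{S}(H_1,\ldots,H_t))|+q(\mathcal{S}(H_1,\ldots,H_t))$ colors because color repetitions can only occur along complement-paths of the type allowed in $\mathcal{S}$. Your elaboration of the lower bound (extracting a legal path system $\mathcal{S}_c$ from the equal-color classes of an arbitrary POC and comparing it to a maximum one) is in fact considerably more detailed than the paper's one-sentence justification of that step.
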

\begin{proof}
For a weighted complete multipartite graph $(K_{n_1,\ldots ,n_k}, w)$ such that $w: V(K_{n_1,\ldots ,n_k}) \rightarrow \{1, \ldots ,t\}$, let $H_1,\ldots ,H_t$ be a MOCs. By the construction, we need at least  $\displaystyle\sum_{i=1}^t|V(H_i)|-|V(\mathcal{S}(H_1,\ldots , H_t))|+q(\mathcal{S}(H_1,\ldots , H_t))$ colors to give a POC on the induced subgaph by $\displaystyle\cup_{i=1}^tV(H_i)$ in $(K_{n_1,\ldots ,n_k}, w)$, since otherwise, for some $i$, $H_i$ cannot be consecutively colored, or a pair of vertices $x$ and $y$ such that $V(H_i)=\{x\}, V(H_{i+1})=\{y\}$ cannot have distinct colors and then we cannot give a POC on $(K_{n_1,\ldots ,n_k}, w)$. 
This implies that $\chi_{POC}(K_{n_1,\ldots ,n_k}; t)\ge h(n_1,\ldots ,n_k,t)$. 
This, together with Proposition~\ref{p1}, shows that $\chi_{POC}(K_{n_1,\ldots ,n_k}; t)=h(n_1,\ldots ,n_k,t).$
\end{proof}

Now we obtain the following corollary.

\begin{corollary}\label{cor}
Let $n_1,\ldots, n_k$ be positive integers with $k\ge 2$. Then, $$\chi_{POC}(K_{n_1,\ldots ,n_k}; t)\le  (k-1)t+1.$$ 
\end{corollary}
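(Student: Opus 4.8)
The plan is to bound $\chi_{POC}(K_{n_1,\ldots,n_k};t)$ via Proposition~\ref{p2}, which reduces the problem to estimating $h(n_1,\ldots,n_k,t)$ from above. By definition, $h(n_1,\ldots,n_k,t)$ is the maximum over weight functions $w$ (with $t$ distinct weights) of the quantity
$$\sum_{i=1}^t|V(H_i)|-|V(\mathcal{S}(H_1,\ldots,H_t))|+q(\mathcal{S}(H_1,\ldots,H_t)),$$
where $H_1,\ldots,H_t$ are MOCs and $\mathcal{S}(H_1,\ldots,H_t)$ is a maximum $(H_1,\ldots,H_t)$-paths. So it suffices to show that for every admissible $w$ and every choice of MOCs and maximum paths, this expression is at most $(k-1)t+1$.

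First I would control the first term. Since each $H_i$ is a clique in the complete multipartite graph $K_{n_1,\ldots,n_k}$, it meets each partite set in at most one vertex, so $|V(H_i)|\le k$ for every $i$, giving $\sum_{i=1}^t|V(H_i)|\le kt$. This already yields the crude bound $kt$ plus the path correction, so the whole point of the argument is that the subtraction $-|V(\mathcal{S})|+q(\mathcal{S})$ saves enough to bring $kt$ down to $(k-1)t+1$; that is, I must show
$$|V(\mathcal{S}(H_1,\ldots,H_t))|-q(\mathcal{S}(H_1,\ldots,H_t))\ge t-1.$$
The natural way to see this is through the components: if $\mathcal{S}$ has components $P_1,\ldots,P_q$, then $|V(\mathcal{S})|-q=\sum_{j=1}^q(|V(P_j)|-1)$, which counts the total number of edges of the forest $\mathcal{S}$ of vertex-disjoint paths. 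So the required inequality is equivalent to saying $\mathcal{S}$ has at least $t-1$ edges.

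The main obstacle, and the heart of the proof, is to show that a \emph{maximum} $(H_1,\ldots,H_t)$-paths always has at least $t-1$ edges. The idea is that consecutive cliques $H_i$ and $H_{i+1}$ should be linkable by an edge of $\overline{K_{n_1,\ldots,n_k}}$: since $H_i$ and $H_{i+1}$ each contain a vertex and these sit at consecutive weight levels, one expects to find $p\in H_i$, $p'\in H_{i+1}$ lying in a common partite set (hence nonadjacent in $K_{n_1,\ldots,n_k}$, so adjacent in the complement), and such an edge respects property~(i) because $w(p')=w(p)+1$. Stringing together one such edge for each $i=1,\ldots,t-1$ and pruning to satisfy properties (ii) and (iii) should produce a subgraph with exactly $t-1$ edges obeying (i)--(iii); maximality of $\mathcal{S}$ then forces $|V(\mathcal{S})|-q(\mathcal{S})\ge t-1$. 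The delicate part is verifying that such consecutive links can be chosen simultaneously without violating the constraint (property~(ii)) that two components share at most one clique and (property~(iii)) that each $H_i$ meets $\mathcal{S}$ in at most two vertices; one must argue that even when $K_{n_1,\ldots,n_k}$ forces adjacencies, the MOC structure guarantees enough cross-level nonadjacencies to assemble a path system of the required size. Once $|V(\mathcal{S})|-q(\mathcal{S})\ge t-1$ is established, combining it with $\sum_i|V(H_i)|\le kt$ gives $h(n_1,\ldots,n_k,t)\le kt-(t-1)=(k-1)t+1$, and Proposition~\ref{p2} finishes the proof.
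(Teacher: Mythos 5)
Your reduction to Proposition~\ref{p2} and the identity $|V(\mathcal{S})|-q(\mathcal{S})=\sum_j(|V(P_j)|-1)$ (the number of edges of the path system) are fine, but the key lemma you rely on --- that a maximum $(H_1,\ldots,H_t)$-paths always has at least $t-1$ edges --- is false, and the step you flag as ``delicate'' is exactly where it breaks. Consecutive cliques $H_i$ and $H_{i+1}$ need not meet a common partite set, so there may be no edge of $\overline{K_{n_1,\ldots,n_k}}$ joining them that satisfies property~(i). Concretely, take $K_{2,2}$ with partite sets $\{a_1,a_2\}$ and $\{b_1,b_2\}$, $t=2$, and $w(a_1)=w(a_2)=1$, $w(b_1)=w(b_2)=2$. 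Then $H_1=\{a_1\}$, $H_2=\{b_1\}$ is a MOCs, and since every weight-$1$ vertex lies in a different partite set from every weight-$2$ vertex, no two vertices of consecutive weights are nonadjacent; hence $\mathcal{S}=\emptyset$ and $|V(\mathcal{S})|-q(\mathcal{S})=0<t-1$. The corollary survives there only because $\sum_i|V(H_i)|=2$ is far below $kt=4$: the two bounds you want to prove separately cannot be decoupled.

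The correct argument trades the two quantities off against each other, which is what the paper does. Assuming for contradiction that $\sum_i|V(H_i)|>|V(\mathcal{S})|-q(\mathcal{S})+(k-1)t+1$ and using $|V(H_i)|\le k$, one finds that at least $|V(\mathcal{S})|-q(\mathcal{S})+2$ of the cliques $H_i$ are \emph{full}, i.e.\ have exactly $k$ vertices. A full clique meets every partite set, so it necessarily shares a partite set with its (nonempty) neighbour $H_{i\pm1}$ and can be linked to it by a $2$-vertex path in the complement; these links assemble into an $(H_1,\ldots,H_t)$-paths with at least $|V(\mathcal{S})|-q(\mathcal{S})+1$ components, hence at least $2\bigl(|V(\mathcal{S})|-q(\mathcal{S})+1\bigr)>|V(\mathcal{S})|$ vertices (since $q(\mathcal{S})\le |V(\mathcal{S})|-q(\mathcal{S})$), contradicting the maximality of $\mathcal{S}$. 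In short, your linking step is valid precisely for the full cliques, and the deficit of the non-full cliques below $k$ is exactly what compensates for the missing links; the two counts must be run together, not separately.
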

\begin{proof}
Let $H_1,\ldots ,H_t$ and $\mathcal{S}(H_1,\ldots , H_t))$ be as in Proposition~\ref{p1}. 
It suffices to show that $$\displaystyle\sum_{i=1}^t|V(H_i)|-|V(\mathcal{S}(H_1,\ldots , H_t))|+q(\mathcal{S}(H_1,\ldots , H_t))\le (k-1)t+1.$$ 
We will show a contradiction to the assumption that $$\displaystyle\sum_{i=1}^t|V(H_i)|> |V(\mathcal{S}(H_1,\ldots , H_t))|-q(\mathcal{S}(H_1,\ldots , H_t))+(k-1)t+1.$$  
This implies that at least $|V(\mathcal{S}(H_1,\ldots , H_t))|-q(\mathcal{S}(H_1,\ldots , H_t))+2$ cliques of $H_1,\ldots , H_t$ have $k$ vertices, respectively. This assures us that we can find maximum $(H_1,\ldots ,H_t)$-paths in $\overline{K_{n_1,\ldots ,n_k}}$ having at least $|V(\mathcal{S}(H_1,\ldots , H_t))|-q(\mathcal{S}(H_1,\ldots , H_t))+1$ components. Recall that each component of $\mathcal{S}(H_1,\ldots , H_t)$ consists of at least two vertices. Therefore, it contains $$2(|V(\mathcal{S}(H_1,\ldots , H_t))|-q(\mathcal{S}(H_1,\ldots , H_t))+1)$$ vertices, which is more than $|V(\mathcal{S}(H_1,\ldots , H_t))|$ vertices. This contradicts our assumption on the maximality of $|V(\mathcal{S}(H_1,\ldots , H_t))|$.
\end{proof}

The bound on $\chi_{POC}(K_{n_1,\ldots ,n_k}; t)$ of Corollary~\ref{cor} is best possible. We will show this in the next section (after proving Theorem~\ref{mainth}) .  

\section{ Proof of Theorems~\ref{mainth} and~\ref{main3} together with some remarks}
We are now in a position to prove our second main theorem. 
\begin{proof}
Construct a complete multipartite graph $K_{n_1,\ldots ,n_{\chi(G)}}$ from $G$ by adding edges and apply Corollary~\ref{cor} to $K_{n_1,\ldots ,n_{\chi(G)}}$. Then there exists a POC on $K_{n_1,\ldots ,n_{\chi(G)}}$ using at most $(\chi(G)-1)t+1$ colors. This vertex coloring is also a POC on $G$. Thus we have  that $\chi_{POC}(G,t)\le (\chi(G)-1)t+1$. This completes the proof of Theorem~\ref{mainth}. 
\end{proof}

We remark that the upper bound on $\chi_{POC}(G,t)$ is sharp. To see this, consider the case where $(G, w)$ is a weighted complete multipartite graph with $w: V(G)\rightarrow \{1,\ldots ,t\}$ such that each partite set contains vertices having $t$ distinct weights. 
Then any MOCs $H_1,\ldots , H_t$ of $G$ satisfy $$|V(\mathcal{S}(H_1,\ldots , H_t))|=2t-2.$$ Therefore, in view of the coloring procedure described as in the proof of Proposition~\ref{p1}, we can easily check that such a graph attains the upper bound.

We then provide the following two propositions concerning the relationship between digraphs and POC coloring in vertex weighted graphs. 

\begin{proposition}~\label{p3}
Let $(G, w)$ be a weighted graph. Then there
exists a good acyclic orientation of $(G,w)$; moreover, we have $\chi_{POC}(G,w)\leq \ell'(G,w)$.  
\end{proposition}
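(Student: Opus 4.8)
The plan is to prove the two claims of Proposition~\ref{p3} in sequence, since the second depends on the first. \emph{Existence of a good acyclic orientation.} First I would orient each edge $uv$ of $G$ according to the weights: if $w(u) > w(v)$, direct the arc as $(u,v)$ (from larger weight to smaller), which automatically respects the requirement $w(x) \geq w(y)$ for every arc $(x,y)$. The only edges left undetermined are those between vertices of equal weight. For these, the strategy is to break ties using a fixed total order on the vertices consistent with the weights: choose any linear ordering $\sigma$ of $V(G)$ such that $w(u) \leq w(v)$ whenever $u$ precedes $v$ in $\sigma$, and orient every remaining edge from its later endpoint to its earlier endpoint in $\sigma$. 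Under this rule every arc $(x,y)$ satisfies $\sigma(y) < \sigma(x)$, so the orientation has no directed cycle (a directed cycle would force a strictly decreasing sequence of $\sigma$-values returning to its start), i.e.\ it is acyclic. Moreover every arc respects $w(x) \geq w(y)$, so the orientation is good.

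\emph{The inequality $\chi_{POC}(G,w) \leq \ell'(G,w)$.} Fix a good acyclic orientation $D$ attaining the minimum, so $\ell'(D) = \ell'(G,w)$. The plan is to exhibit an explicit POC using at most $\ell'(D)$ colors. For each vertex $v$, define $c(v)$ to be the number of vertices on a longest directed path in $D$ that has $v$ as its head (equivalently, one plus the length of the longest directed path terminating at $v$). This is the standard coloring used in the Gallai--Hasse--Roy--Vitaver theorem, and since $D$ is acyclic it is well-defined, and every color lies in $\{1,\ldots,\ell'(D)\}$.

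It then remains to verify that $c$ is genuinely a POC on $(G,w)$, which means checking the two defining conditions on each edge $uv$. For an edge with $w(u) > w(v)$, the good orientation gives the arc $(u,v)$; any longest directed path ending at $v$ can be extended backward through the arc $(u,v)$ to produce a longer path ending at $u$, whence $c(u) \geq c(v)+1 > c(v)$, as required. For an edge with $w(u) = w(v)$, the arc is oriented one way, say $(u,v)$, and the same extension argument gives $c(u) > c(v)$, so in particular $c(u) \neq c(v)$; this handles the equal-weight case as well. Hence $c$ is a POC using at most $\ell'(D) = \ell'(G,w)$ colors, establishing the inequality. The main obstacle I anticipate is not any single step but making the tie-breaking in the orientation precise enough that acyclicity and goodness hold simultaneously, and then confirming that the path-extension argument correctly delivers the strict inequality $c(u) > c(v)$ in \emph{both} edge cases, since that single monotonicity fact is what simultaneously certifies the proper-ordering condition and the distinctness condition of a POC.
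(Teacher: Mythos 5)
Your construction of a good acyclic orientation is correct and essentially matches the paper's (the paper gives each equal-weight class an arbitrary acyclic orientation and proves acyclicity by contradiction; your total-order tie-breaking is a clean special case of that). For the inequality $\chi_{POC}(G,w)\leq\ell'(G,w)$ you take a genuinely different route --- the paper runs a greedy algorithm (Algorithm F$'$) over the vertices in non-decreasing weight order, whereas you use the explicit Gallai--Hasse--Roy--Vitaver longest-path coloring --- but your version of that coloring has the direction reversed, and this breaks the argument. You set $c(v)$ to be the number of vertices of a longest directed path with $v$ as its \emph{head}. With that definition, for an arc $(u,v)$ the only valid extension is forward: a longest path ending at $u$ followed by the arc $(u,v)$ is a longer path ending at $v$, so the true inequality is $c(v)\geq c(u)+1$. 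Your claimed ``backward extension'' of a path ending at $v$ to a longer path ending at $u$ via the arc $(u,v)$ is impossible, since that arc points into $v$, not into $u$; so the conclusion $c(u)>c(v)$ does not follow. Worse, since a good orientation places the arc as $(u,v)$ exactly when $w(u)>w(v)$, your coloring assigns the \emph{smaller} color to the heavier endpoint of every such edge, so it is not a POC at all.

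The repair is local: define $c(v)$ as the number of vertices of a longest directed path with $v$ as its \emph{tail} (i.e., starting at $v$). Then for any arc $(u,v)$, prepending $u$ to a longest path starting at $v$ yields $c(u)\geq c(v)+1$, which gives $c(u)>c(v)$ when $w(u)>w(v)$ and $c(u)\neq c(v)$ when $w(u)=w(v)$, with all colors in $\{1,\ldots,\ell'(D)\}$ by acyclicity. With that single correction your argument is sound and provides a self-contained alternative to the paper's Algorithm F$'$, trading the paper's weight-ordered greedy procedure for the classical longest-path potential function.
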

\begin{proof}
To obtain a desired orientation, we firstly look at the induced subgraph $H_i$ by the vertex set $\{v\in V(G)| w(v)=i\}$ for $i=1,\ldots , |W|$ in $(G, w)$. For every $i$, if $H_i$ contains an edge, then we give any acyclic orientation on $H_i$. We then give an orientation on other edges $xy\in E(G)$ so that $(x,y)$ is an arc if and only if $w(x)>w(y)$ holds.  

We now claim that the resulting orientation $D$ is a desired orientation. To see this, suppose that the resulting digraph $D$ contains a directed cycle $C$. Let $(x,y)$ be an arc on $C$ and consider the directed path obtained from $C$ by deleting $(x,y)$. Note that $y$ is the tail and $x$ is the head on this directed path and hence $w(x)\le w(y)$. By the construction, we may assume that $V(C)$ contains two vertices $u,v$ such that $w(u)<w(v)$. This implies that $w(x)<w(y)$. However, this contradicts the construction of $D$ because we have $(x,y)\in A(D)$. 
Hence $D$ is a good acyclic orientation. To show the second assertion,  we provide the following algorithm F$'$, which is a slight modification of Algorithm F for the digraph case. 
Note that, $N^+(v)$ means the out-neighbour of a vertex $v$ in $D$. 

\begin{mdframed}

\vspace{3mm}
\centerline{\bf Algorithm F$'$}

\medskip

\noindent
{\bf Input:} $(G,w)$ with a good acyclic orientation $D$, where $V(G)=$ \\ $\{v_1,\ldots,v_n\}$ and $w(v_1)\leq\cdots\leq w(v_n)$;\\

\noindent
{\bf Output:} a POC with at most $\ell'(D)$ colors.\\[-3mm]

\begin{description}
\item{{\bf Step 1.}} Set $c(v_1)=1$. 

\item{
{\bf Step 2.}} For $j=2,\cdots ,n$, if $N^+(v_j)\cap (\cup_{i=1}^{j-1}\{v_i\})=\emptyset$, then set $c(v_j)=1$; otherwise, set  $c(v_j)=\max\{c(v_i)| (v_j, v_i)\in A(D)$ and $1\le i<j\}+1$. 
\end{description}
\end{mdframed}
Arguing similarly as in the proof of Theorem~\ref{alg-F-prime-theorem}, we see that Algorithm F$'$ yields a POC with at most $\ell'(D)$ colors, thereby proving that $\chi_{POC}(G,w)\le \ell'(D)$. Thus we have $\chi_{POC}(G,w)\leq \ell'(G,w)$.  
\end{proof}

\begin{proposition}\label{new}
Let $(G, w)$ be a weighted graph and $c: V(G)\rightarrow C$ be a POC on $(G,w)$ with $C=\{1,2, \ldots, \theta\}$. Then there exists a good acyclic orientation $D$ of $G$ such that $\ell'(D)\leq \theta$ (that is, $\chi_{POC}(G,w)\geq \ell'(G,w)$). 
\end{proposition}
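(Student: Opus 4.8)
The plan is to read a good acyclic orientation directly off the given coloring $c$, in a manner dual to the orientation-from-weights construction of Proposition~\ref{p3}. First I would orient every edge from its higher-colored endpoint toward its lower-colored endpoint: since a POC is in particular a proper coloring of $G$, we have $c(u)\neq c(v)$ for every edge $uv$, so exactly one endpoint carries the larger color and the rule assigns a well-defined orientation to each edge. Call the resulting digraph $D$; thus $(x,y)\in A(D)$ exactly when $xy\in E(G)$ and $c(x)>c(y)$.

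The next step is to verify that $D$ is \emph{good}, which is the only place the POC hypothesis (beyond properness) genuinely enters. For an arc $(x,y)$ we have $c(x)>c(y)$, and I would argue $w(x)\ge w(y)$ by contradiction: if instead $w(x)<w(y)$, then applying the defining property of a POC to the edge $xy$ (with $y$ the heavier endpoint) yields $c(y)>c(x)$, contradicting $c(x)>c(y)$. Hence $w(x)\ge w(y)$ for every arc, so $D$ is a good orientation.

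It then remains to check acyclicity and the bound on $\ell'(D)$, both of which follow from the single fact that color strictly decreases along each arc. Along any directed path $p_1\to p_2\to\cdots\to p_s$ in $D$ we obtain $c(p_1)>c(p_2)>\cdots>c(p_s)$; in particular a directed walk can never return to a previously visited vertex, so $D$ is acyclic, and the values $c(p_1),\ldots,c(p_s)$ are $s$ distinct elements of $\{1,\ldots,\theta\}$, forcing $s\le\theta$. Therefore $\ell'(D)\le\theta$, as required; combined with Proposition~\ref{p3} this gives $\chi_{POC}(G,w)=\ell'(G,w)$ and hence Theorem~\ref{main3}.

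I do not expect a real obstacle here, since the statement is essentially the converse of Proposition~\ref{p3} and the construction makes each verification immediate. The one point worth stating carefully is that the color-to-weight monotonicity needed for goodness is precisely the content of the POC conditions, and that the strict decrease of colors along arcs simultaneously delivers acyclicity and the length bound, so no separate argument for either is needed.
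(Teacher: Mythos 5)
Your proposal is correct and follows essentially the same route as the paper: orient each edge from the higher-colored endpoint to the lower-colored one, then observe that colors strictly decrease along arcs, which gives goodness (via the POC condition), acyclicity, and the bound $\ell'(D)\le\theta$ all at once. The paper compresses these verifications into a single ``obviously,'' so your write-up is in fact the more complete version of the same argument.
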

\begin{proof}
We give an orientation on $(G,w)$ from the vertex coloring of $c$ on $V(G)$ by the following manner: for an edge $xy$ of $G$, if $c(x)>c(y)$ then orient $xy$ so that $x$ is a tail and $y$ is a head. Let $D$ be the resulting orientation of $(G,w)$. Since $c$ is a POC on $(G,w)$, by the construction, obviously $D$ is a good acyclic orientaton of $(G,w)$ and we need at least $\ell'(D)$ colors in $C$. Thus we have $\ell'(D)\le \theta$.   
\end{proof}


Combining Propositions~\ref{p3} and~\ref{new}, we obtain Theorem~\ref{main3}.

We finally suggest some open questions for graphs with no large clique from the following viewpoint: Obviously, the value of $\chi_{POC}(G,t)$ must be at least the order of any clique contained in $G$. Utilizing our result concerning complete multipartite graphs, we obtained a nontrivial upper bound on $\chi_{POC}(G,t)$ (in terms of the chromatic number and $t$) for general graphs. The situation would change a lot if we restrict our attention to sparse graphs such as planar graphs or graphs with large girth. 
What is the sharp upper bound on $\chi_{POC}(G,t)$ for these graph classes? This could be a challenging but interesting direction of further research. 

\section*{Acknowledgments} The authors would like to thank the referees for carefully reading our article and for many helpful comments.
The first author's research was supported by JSPS KAKENHI (19K03603).

\end{document}